\numberwithin{equation}{section}
\newtheorem{theorem}{Theorem}
\newtheorem{lemma}[theorem]{Lemma}
\newdefinition{remark}{Remark}
\newproof{proof}{Proof}
\newproof{pot}{Proof of Theorem \ref{thm2}}
\newcommand{\Z}{\mathbb{Z}}
\newcommand{\Q}{\mathbb{Q}}
\newcommand{\N}{\mathfrak{N}}
\newcommand{\n}{\mathfrak{n}}
\newcommand{\s}{\mathfrak{s}}
\newcommand{\gen}{\text{gen}}
\newcommand{\lan}{\langle}
\newcommand{\ran}{\rangle}
\DeclareMathOperator{\ord}{ord}
\DeclareMathOperator{\rad}{sf}
\begin{document}

\begin{frontmatter}

\title{A characterization of almost universal ternary quadratic polynomials with odd prime power conductor}
\author{Anna Haensch \corref{cor1}}

\cortext[cor1]{Corresponding Author: Anna Haensch, Duquesne University, Department of Mathematics and Computer Science, 600 Forbes Ave., Pittsburgh, PA, 15282, USA; Email, haenscha@duq.edu; Phone, +001 412 396 4851}


\begin{abstract}
An integral quadratic polynomial (with positive definite quadratic part) is called almost universal if it represents all but finitely many positive integers.  In this paper, we introduce the conductor of a quadratic polynomial, and give an effective characterization of almost universal ternary quadratic polynomials with odd prime power conductor.  
\end{abstract}

\begin{keyword}
primitive spinor exceptions \sep ternary quadratic forms 

\MSC[2010] 11E12 \sep11E20 \sep 11E25
\end{keyword}

\end{frontmatter}

\section{Introduction}
A fundamental question in the study of quadratic forms asks, given a quadratic polynomial $f$ and an integer $a$, how can we decide when $f$ represents $a$ over the integers?  This question has been the driving force behind many developments in number theory over the past centuries.  Very notably, in 1900, David Hilbert addressed the International Congress of Mathematicians in Paris posing his famous list of 23 problems; among them was the following:

\begin{quote}
To solve a given quadratic equation with algebraic numerical coefficients in any number of variables by integral or fractional numbers belonging to the algebraic realm of rationality determined by the coefficients \cite{Hil}.
\end{quote}

Along this line of inquiry, given an integer $a$ and an inhomogeneous quadratic polynomial 
\[
f(x)=Q(x)+\ell(x)+c,
\] 
in more than one variable, when is $f(x)=a$ solvable over the integers?  Here $Q$ and $\ell$ are homogeneous quadratic and linear polynomials, respectively, and $c$ is a constant.   Triangular numbers, defined by $T_x=\frac{x(x-1)}{2}$ where $x$ is an integer, give a particularly interesting family of inhomogeneous quadratic polynomials.  In 1796, Gauss observed that the sum of three triangular numbers, $T_x+T_y+T_z$, represents all natural numbers, a property which we call \emph{universal}.  This fact can be easily confirmed by completing the square, and noting that $8n+3$ is represented by the polynomial $(2x+1)^2+(2y+1)^2+(2z+1)^2$ as a consequence of the Three Square Theorem.  

In 1862 Liouville examined this problem further, by asking for which positive integer triples $(\alpha,\beta,\gamma)$ the weighted ternary sum $\alpha T_x+\beta T_y+\gamma T_z$ is universal.  He determined that only seven such triples exist, namely  $(1,1,1), (1,1,2), (1,1,4),$ $(1,1,5),(1,2,2),(1,2,3)$ and $(1,2,4)$.  Recently, in \cite{Guo07}, \cite{OS09}, and \cite{Sun07}, Z.-W. Sun et al.  effectively determine all $\alpha x^2+\beta T_y+\gamma T_z$ and $\alpha x^2+\beta y^2+\gamma T_z$ that are universal. 

An extension to the above problem is a characterization of quadratic polynomials representing all but finitely many natural numbers; that is, quadratic polynomials which are \emph{almost universal}.  Kane and Sun approach this question in \cite{KS10}, conjecturing a list of necessary and sufficient conditions on the triple $(\alpha,\beta,\gamma)$ of positive integers for which $\alpha T_x+\beta T_y+\gamma T_z$, $\alpha x^2+\beta T_y+\gamma T_z$ or $\alpha x^2+\beta y^2+\gamma T_z$ are almost universal.  In \cite{KS10}, they resolved the conjecture for $\alpha x^2+\beta y^2+\gamma T_z$ and prove sufficiency for the remaining two cases.  In \cite{CO09}, Chan and Oh resolve the conjecture for weighted sums of three triangular numbers, and in a joint paper with Chan \cite{CH11}, we resolve the remaining case. 

From here, the problem can be further generalized to arbitrary inhomogeneous quadratic polynomials, $f(x)=Q(x)+\ell(x)+c$, where $Q$ is a quadratic map associated to some positive definite lattice $N$, with symmetric bilinear map $B$.  Under the assumption that $N$ is positive definite, $\ell(x)=2B(\nu,x)$ for a unique choice of vector $\nu$ in $V:=\Q N$, and there is no harm in setting $c=0$ since the constant term does not contribute to the arithmetic of $f$.  An easy calculation reveals that an integer $n$ is represented by $f(x)$ if and only if $Q(\nu)+n$ is represented by the coset $\nu+N$.  

We define the conductor of the coset $\nu+N$ to be the smallest integer $\mathfrak{m}$ for which $\mathfrak{m}\nu\in N$.  Equivalently, $\mathfrak{m}=[M:N]$, where $M:=\Z\nu+N$.

As an example, consider the polynomial $f(x,y,z)=25x^2+y^2+z^2+10x$.   Taking $N\cong\lan25,1,1\ran$ in a basis $\{e_1,e_2,e_3\}$ and letting $\nu=\frac{e_1}{5}$, we see that $\nu$ has conductor 5, and $f(w)=Q(w)+2B(\nu,w)$ for any $w\in N$.  Now an integer $n$ is represented by $f$ if and only if $1+n$ is represented by the coset $\nu+N$; that is, $1+n=Q(\nu+e_1x+e_2y+e_3z))=(5x+1)^2+y^2+z^2$, which is just a sum of three squares with the added restriction that the first term is congruent to $1\mod 5$.  For this particular example it is easy to see that $f$ is not almost universal by the three square theorem, but this result will be further confirmed by Theorem \ref{main}.

When $N$ has rank greater than 3, Chan and Oh \cite[Theorem 4.9]{CO12} show how the asymptotic local-global principles for representations of lattices with approximation property by J\"ochner- Kitaoka \cite{JK94} and by Hsia-J\"ochner \cite{HJ97} lead to an asymptotic local-global principle for representations of integers by cosets, consequently we will restrict our attention to ternary $N$.  Imposing some mild arithmetic conditions on $f$, this paper establishes a characterization of almost universal ternary inhomogeneous quadratic polynomials.   

\section{Preliminaries}

Henceforth, the language of quadratic spaces and lattices as in \cite{OM} will be adopted, and the notation will follow that used in \cite{CH11}. Any unexplained notation and terminology can be found there and in \cite{OM}.   All the $\mathbb Z$-lattices discussed below are positive definite. If $K$ is a $\mathbb Z$-lattice and $A$ is a symmetric matrix, we shall write ``$K \cong A$" if $A$ is the Gram matrix for $K$ with respect to some basis of $K$. The discriminant of $K$, denoted $dK$, is the determinant of $A$.  An $n\times n$ diagonal matrix with $a_1, \ldots, a_n$ as the diagonal entries is written as $\lan a_1, \ldots, a_n\ran$.  However, we use the notation $[a_1, \ldots, a_n]$ to denote a quadratic space (over any field) with an orthogonal basis whose associated Gram matrix is the diagonal matrix $\lan a_1, \ldots, a_n\ran$.

The subsequent discussion involves the computation of the spinor norm groups of local integral rotations and the relative spinor norm groups of primitive representations of integers by ternary quadratic forms.   The formulae for all these computations can be found in \cite{EH75}, \cite{EH78}, \cite{EH94}, and \cite{H75}.  A correction of some of these formulae can be found in \cite[Footnote 1]{CO09}.  Following the notation set forth in \cite[\S 55]{OM}, we let $\theta$ denote the spinor norm map.  If $t$ is an integer represented primitively by $\gen(K)$ and $p$ is a prime, then $\theta^*(K_p, t)$ is the primitive relative spinor norm group of the $\Z_p$-lattice $K_p$. If $E$ is a quadratic extension of $\mathbb Q$, $\N_p(E)$ denotes the group of local norms from $E_{\mathfrak p}$ to $\mathbb Q_p$, where $\mathfrak p$ is an extension of $p$ to $E$.

Now suppose we have the integer-valued inhomogeneous polynomial as described above, where $Q$ is ternary.  For simplicity we will require that the $\Z$-ideal generated by $Q(\nu+x)$, for all $x\in N$, is contained in $\Z$.  Under this assumption, it is immediate that $Q(\nu)\in \Z$, and the $\Z$-ideal generated by $Q(x)+2B(\nu,x)$ for all $x\in N$, which we denote $\n(\nu,N)$, is contained in $\Z$.  It is well known that $Q(x)+2B(\nu,x)$ can never be almost universal, since this would imply that the ternary $\Z$-lattice $M$ is almost universal, which is impossible by the Local Square Theorem.  Therefore, a reasonable requirement is to fix a prime $p$, and assume that 
\begin{equation}\label{R1}
\n(\nu,N)=p^\alpha \Z, 
\end{equation}
with $\alpha>0$.  For the purposes of this paper, we will restrict out attention to odd $p$.  We define an integer-valued polynomial $H(x)$ by 
\[
H(x):=\frac{Q(x)+2B(\nu,x)}{p^\alpha},
\] 
where $p$ is an odd prime.  The sum of three triangular numbers can be obtained by letting $N\cong\lan4,4,4\ran$ in a basis $\{e_1,e_2,e_3\}$ with $\nu=\frac{e_1+e_2+e_3}{2}$, which corresponds to $p=2$ and $\alpha=3$.  While this is not the unique setup for a sum of three triangular numbers, any choice of $N$ and $\nu$ will leave $p=2$, and therefore the sum of triangular numbers will not be included in this discussion.  

For the remainder of this paper, we will assume that equation (\ref{R1}) holds, noting that under this assumption $B(\nu,N)$ and $Q(N)$ are subsets of $p^\alpha\Z$.  We also note that under these conditions there is a unique pair $(\nu,N)$ associated to $H(x)$, and without confusion we will let $\mathfrak{m}$ denote the conductor associated to this pair.  Our goal is to obtain a list of necessary and sufficient conditions on the pair $(\nu,N)$ under which this $H(x)$ is almost universal.  

\begin{lemma}\label{L1}
\begin{enumerate}[(1)]
\item If $H(x)$ is almost universal, then $M_q$ represents all $\Z_q$ whenever $q\neq p$, and $N_q$ represents all $\Z_q$ whenever $q\nmid \mathfrak{m}$. 
\item If $N_q$ represents all of $\Z_q$ then $\theta(O^+(N_q))\supseteq \Z_q^\times$. 
\item If $M_q$ represents all $\Z_q$ for all $q\neq p$, then $\Q_p M=\Q_p N$ is anisotropic. 
\end{enumerate}
\end{lemma}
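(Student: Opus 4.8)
Throughout write $V:=\Q N=\Q M$, so that all three parts concern the ternary quadratic space $V$ and its localizations $V_q=\Q_q N=\Q_q M$. The common engine is the translation recorded before the lemma: an integer $n$ is a value of $H$ precisely when the coset $\nu+N$ represents $Q(\nu)+p^\alpha n$, and since $\nu+N\subseteq M$, any such $n$ is then represented by the lattice $M$. For part (1) I would fix a prime $q\neq p$ and a target $u\in\Z_q$ and use almost universality to represent $u$ locally. The plan is to choose, by the Chinese Remainder Theorem, a large positive integer $n$ with $n\equiv Q(\nu)\pmod{p^\alpha}$ and $n\equiv u\pmod{q^{k}}$ for $k$ large enough that $u/n\in(\Z_q^\times)^2$; this is possible because $p\neq q$. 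Almost universality of $H$ then forces the coset, and hence $M$, to represent $n$, so $M_q$ represents $n$, and scaling the representing vector by the unit square root of $u/n$ shows $M_q$ represents $u$. As $u$ was arbitrary, $M_q$ represents all of $\Z_q$. For the second assertion I would note that $q\nmid\mathfrak m$ gives $[M_q:N_q]=q^{\ord_q\mathfrak m}=1$, i.e. $M_q=N_q$, so the conclusion transfers from $M_q$ to $N_q$ for every such $q\neq p$ (the prime $p$ being excluded since almost universality forces $p\mid\mathfrak m$ under (\ref{R1})).

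For part (2) the hypothesis that $N_q$ represents all of $\Z_q$ supplies, for each unit $c$, vectors $v,w\in N_q$ with $Q(v)=c$ and $Q(w)=1$; since $q$ is odd and these norms are units, the symmetries $\tau_v,\tau_w$ preserve $N_q$, and the rotation $\tau_v\tau_w\in O^+(N_q)$ has spinor norm $Q(v)Q(w)=c$ modulo squares. Letting $c$ range over representatives of the unit square classes yields $\theta(O^+(N_q))\supseteq\Z_q^\times$. For $q=2$ I would instead read the conclusion directly off the explicit spinor-norm formulae of \cite{EH75,EH78,H75}, as corrected in \cite{CO09}.

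Part (3) is the crux, and here I would argue globally. The first, local, step is that an anisotropic ternary space over $\Q_q$ is, up to scaling, the trace-zero norm form of the quaternion division algebra, and such a form represents every square class of $\Q_q^\times$ except exactly one (the class completing it to the anisotropic quaternary form); since every square class of $\Q_q^\times$ already meets $\Z_q$, an anisotropic $V_q$ cannot represent all of $\Z_q$, and a fortiori neither can the lattice $M_q$. Hence the hypothesis forces $V_q$ to be \emph{isotropic} at every finite $q\neq p$. The second, archimedean, step is that $N$ is positive definite, so $V_\infty$ is anisotropic. Finally I would invoke reciprocity: $V$ determines a quaternion algebra over $\Q$, namely its even Clifford algebra, which is ramified exactly at the places where $V$ is anisotropic, and by Hilbert reciprocity the set of ramified places is finite and of even cardinality. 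By the first two steps the only candidates are $\infty$ and $p$, and $\infty$ is one of them, so parity forces $p$ to be the other; that is, $V_p=\Q_p M=\Q_p N$ is anisotropic. The main obstacle I anticipate is the local step — pinning down exactly which square class an anisotropic ternary $\Q_q$-form omits, and verifying that this class is visible inside $\Z_q$ so that representability of all of $\Z_q$ genuinely fails — together with the bookkeeping that matches ``anisotropic place of $V$'' with ``ramified place of the quaternion algebra,'' so that the parity count can be applied cleanly.
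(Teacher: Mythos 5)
Your proposal is correct and follows essentially the same route as the paper: density/congruence selection of represented integers for part (1), the product-of-symmetries description of the spinor norm for part (2), and ``anisotropic ternary local forms miss a square class, hence universality forces isotropy'' combined with Hilbert reciprocity and positive definiteness at $\infty$ for part (3). You simply supply more detail than the paper, which compresses these steps into one-line remarks.
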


\begin{proof}
(1) Suppose $H(x)$ is almost universal.  Then $Q(\nu)+p^\alpha n$ is represented by the coset $\nu+N$ for all but finitely many positive integers $n$.  Consequently, all but finitely many positive integers are represented by $\nu+N_q$ for $q\neq p$, and therefore by $M_q$ for $q\neq p$.  Now by the denseness of $\Z$ in $\Z_q$, $M_q$ represents all of $\Z_q$.  And for $q\nmid \mathfrak{m}$, $\nu+N_q=N_q$, so it follows that $N_q$ represents all of $\Z_q$. 

Part (2) follows immediately from the definition of spinor norm map.  Since $M$ is ternary, it is not difficult to show that if $M_q$ is universal then $M_q$ is isotropic.  Armed with this, part (3) is a direct consequence of Hilbert Reciprocity.  
$\square$ \end{proof}

In view of Lemma \ref{L1}, it will be helpful to impose some restriction on the prime divisors of the conductor of $H(x)$.  A reasonable assumption is that 
\begin{equation}\label{R2}
\mathfrak{m}=p^\gamma
\end{equation} 
for some positive integer $\gamma$.  Throughout the remainder of this paper both (\ref{R1}) and (\ref{R2}) will be assumed. 

As an immediate consequence of Lemma \ref{L1}, if $H(x)$ is almost universal, then $\ord_p(Q(\nu))<\alpha$.  In view of this fact, it is possible to scale $Q$ by $p^{-\ord_p(Q(\nu))}$ and then assume that $\ord_p(Q(\nu))=0$.  For simplicity, we define $\epsilon:=Q(\nu)\in \Z_p^\times$.  Now, $B(\nu,p^\gamma\nu)=p^\gamma \epsilon\equiv 0\mod p^\alpha$, and therefore we conclude that $\gamma\geq \alpha$. 

An arbitrary coset of $N$ in $M$ is of the form $a\nu+N$, where $0\leq a\leq p^\gamma-1$.  Given $x\in N$, we have $Q(a\nu+x)\equiv a^2\epsilon\mod p^\alpha$.  Hence, if $\epsilon+p^\alpha n$ is represented by this $a\nu+N$, then $\epsilon\equiv a^2\epsilon\mod p^\alpha,$ and therefore $a\equiv \pm 1\mod p^\alpha$, because $p$ is odd.  From this we conclude that every representation of $\epsilon+p^\alpha n$ by $M$ must be from one of the cosets $(bp^\alpha\pm 1)\nu+N$, $0\leq b\leq p^{\gamma-\alpha}$.  In order to be sure that a representation of $\epsilon+p^\alpha n$ is from $\nu+N$, we need to make an additional assumption; namely, $\gamma=\alpha$.  Under this assumption, any representation of $\epsilon+p^\alpha n$ by $M$ must be from $\nu+N$ or $-\nu+N$.  However, it is not difficult to see that these cosets represent precisely the same integers.   Under this assumption, (\ref{R2}) becomes $\mathfrak{m}=p^\alpha$.

Given a primitive spinor exception $t$ of the genus of $M$, we define $E:=\Q(\sqrt{-tdM})$.  Note that if $t$ is a primitive spinor exception of $\gen(M)$, then $-tdM\not\in \Q^2$, as shown in \cite{K56}.

\begin{lemma}\label{L2}
Suppose that $M_q$ represents all $q$-adic integers for every prime $q\neq p$.  If $t$ is a primitive spinor exception of $\gen(M)$, then $E=\Q(\sqrt{-p})$ and $p\equiv 7\mod 8$, when $p>2$.  
\end{lemma}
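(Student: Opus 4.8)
The plan is to deduce the shape of $E$ place by place from the standard criterion for primitive spinor exceptions. I would invoke the result underlying \cite{EH75,EH78,EH94} and used in \cite{CO09,CH11}: since $-t\,dM \notin (\Q^\times)^2$ by \cite{K56}, the integer $t$ is a primitive spinor exception of $\gen(M)$ if and only if
\[
\theta^*(M_q,t) \subseteq \N_q(E) \qquad \text{for every prime } q,
\]
where $E = \Q(\sqrt{-t\,dM})$. Because $M$ is positive definite we have $dM>0$, and $t>0$ since it is represented; hence $-t\,dM<0$ and $E$ is imaginary, which also disposes of the archimedean place. The proof then amounts to reading off, for each $q$, what the hypothesis that $M_q$ represents all of $\Z_q$ for $q\neq p$ forces on the splitting type of $q$ in $E$.

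For an odd prime $q\neq p$, universality of $M_q$ makes $\Q_q M$ isotropic, and the relative spinor norm formulas of Earnest--Hsia give $\theta^*(M_q,t)\supseteq \Z_q^\times$ (indeed $\theta^*(M_q,t)=\Z_q^\times(\Q_q^\times)^2$, the even-valuation subgroup). The inclusion $\theta^*(M_q,t)\subseteq \N_q(E)$ then forces $\Z_q^\times \subseteq \N_q(E)$, which for odd $q$ holds exactly when $q$ is unramified in $E$. Thus $E$ is unramified at every odd prime $q\neq p$. Since $E$ is a genuine imaginary quadratic field, its fundamental discriminant is therefore supported only at $p$ and possibly $2$; once ramification at $2$ is excluded below we obtain $d_E=-p$, and as $-p$ is a fundamental discriminant only for $p\equiv 3\pmod 4$, this yields $E=\Q(\sqrt{-p})$ with $p\equiv 3\pmod 4$.

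The decisive step is the local analysis at $q=2$. Here I would again use that $M_2$ is universal, but now extract from the $2$-adic relative spinor norm formulas the stronger conclusion that $\theta^*(M_2,t)$ contains an element of odd $2$-adic valuation, i.e.\ $\theta^*(M_2,t)\not\subseteq \Z_2^\times(\Q_2^\times)^2$. Granting this, the inclusion $\theta^*(M_2,t)\subseteq \N_2(E)$ cannot hold if $2$ is inert in $E$, since then $\N_2(E)=\{x\in\Q_2^\times:\ord_2(x)\text{ even}\}$ contains no element of odd valuation; it also forces $2$ to be unramified. Hence $2$ splits in $E$, and for $E=\Q(\sqrt{-p})$ this means $-p\equiv 1\pmod 8$, i.e.\ $p\equiv 7\pmod 8$, completing the argument.

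The main obstacle is precisely the asymmetry between the odd primes and $q=2$: at odd $q$ universality yields only the even-valuation subgroup, which is compatible with $q$ being inert and cannot be strengthened—indeed, as $\Q(\sqrt{-p})$ has infinitely many inert odd primes, strengthening it there would be contradictory—whereas at $q=2$ one must show that universality pushes an odd-valuation class into $\theta^*(M_2,t)$. Establishing this requires a careful passage through the $2$-adic Jordan splitting of $M_2$ together with the Earnest--Hsia relative spinor norm tables, and is where the bulk of the computation will lie. The behaviour at $p$ is then automatic: an imaginary quadratic field unramified at every finite prime other than $p$ must ramify at $p$, forcing $d_E=-p$, in agreement with the anisotropy of $\Q_pM$ furnished by Lemma~\ref{L1}(3).
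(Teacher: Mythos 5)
Your overall skeleton is the same as the paper's: use the spinor-exception criterion to deduce that $E$ is unramified at every prime $q\neq p$, conclude $E=\Q(\sqrt{-p})$ with $p\equiv 3\bmod 4$, and then pin down $p\equiv 7\bmod 8$ by forcing $2$ to split in $E$. Your treatment of the odd primes $q\neq p$ and of the archimedean place is correct and matches the paper. The problem is the step you yourself call decisive: you never prove that $\theta^*(M_2,t)$ contains an element of odd $2$-adic valuation. You write ``granting this'' and defer it to ``a careful passage through the $2$-adic Jordan splitting of $M_2$ together with the Earnest--Hsia relative spinor norm tables.'' As written, the proof of the congruence $p\equiv 7\bmod 8$ --- the only part of the lemma that goes beyond $p\equiv 3\bmod 4$ --- is missing.

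Moreover, the route you sketch for closing this gap is needlessly heavy; no table lookup or Jordan-splitting case analysis is required. The hypothesis that $M_2$ represents all of $\Z_2$ already gives $\theta(O^+(M_2))=\Q_2^\times$ directly from the definition of the spinor norm: $M_2$ contains vectors whose lengths meet every square class of $\Q_2^\times$, and for such vectors $v,w$ the rotation $\tau_v\tau_w$ lies in $O^+(M_2)$ with spinor norm $Q(v)Q(w)\Q_2^{\times 2}$, so these products generate all of $\Q_2^\times/\Q_2^{\times 2}$. Since $\theta(O^+(M_2))\subseteq\theta^*(M_2,t)\subseteq\N_2(E)$, this forces $\N_2(E)=\Q_2^\times$, hence $\Q_2(\sqrt{-p})=\Q_2$, i.e.\ $-p\in\Q_2^{\times 2}$ and $p\equiv 7\bmod 8$. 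This one-line observation is exactly how the paper finishes; with it your argument is complete, but without it your proposal has a genuine hole at precisely its load-bearing step.
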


\begin{proof}
Suppose that $M_q$ represents all $q$-adic integers for every prime $q\neq p$.  For any $q\neq p$, $M_q=N_q$, and so $\Z_q^\times\subseteq \theta(O^+(M_q))\subseteq \mathfrak{N}_p(E)$, by Lemma \ref{L1} part (2).  It is immediate that $E$ is unramified at all $q\neq p$, and therefore no prime other than $p$ will divide the discriminant of $E$. 

For odd primes $p$, there is only one choice of an imaginary quadratic extension ramified only at the prime $p$, namely, $\Q(\sqrt{-p})$, where $p\equiv 3\mod 4$.  By our initial assumption $M_2$ represents all integers in $\Z_2$, so from the definition of the spinor norm, it is clear that $\mathfrak{N}_2(E)\supseteq \theta(O^+(M_2))=\Q_2^\times$.  Consequently, $[\Q_2^\times:\mathfrak{N}_2(E)]=1$, and hence $\Q_2(\sqrt{-p})=\Q_2$.  Therefore, $p\equiv 7\mod 8$.  
$\square$ \end{proof}

In the following lemma, we show that if $\nu$ is replaced by $\nu+x_0$, where $x_0\in N$, then all results on almost universality will still hold.  

\begin{lemma}\label{L3}
Let $\omega=\nu+x_0$, where $x_0\in N$, and define
\[
H'(x)=\frac{1}{p^\alpha}[Q(x)+2B(\omega,x)].
\]
Then, 
\begin{enumerate}[(1)]
\item if $\n(\nu,N)=p^\alpha \Z$, then $\n(\omega, N)=p^\alpha \Z$; and,
\item if $H(x)$ is almost universal, then $H'(x)$ is almost universal.  
\end{enumerate}
\end{lemma}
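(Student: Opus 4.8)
The plan rests on a single \emph{completing-the-square} identity. For any $w \in N$, expanding $Q(w+x_0)=Q(w)+2B(w,x_0)+Q(x_0)$ and using the bilinearity and symmetry of $B$ together with $\omega=\nu+x_0$, I would verify the purely formal identity
\begin{equation}\label{eq:cs}
Q(w) + 2B(\omega, w) = \bigl[Q(w+x_0) + 2B(\nu, w+x_0)\bigr] - \bigl[Q(x_0) + 2B(\nu, x_0)\bigr].
\end{equation}
No arithmetic hypotheses are needed for this step. The point is that \eqref{eq:cs} expresses each value of the polynomial attached to $\omega$ as a value of the polynomial attached to $\nu$, evaluated at the shifted argument $w+x_0\in N$, minus the fixed quantity $c:=Q(x_0)+2B(\nu,x_0)$.

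For part (1), I would read \eqref{eq:cs} at the level of generators. Both bracketed terms on the right-hand side lie in the ideal $\n(\nu,N)$ by definition, so every generator $Q(w)+2B(\omega,w)$ of $\n(\omega,N)$ is a difference of two elements of $\n(\nu,N)$; hence $\n(\omega,N)\subseteq\n(\nu,N)$. Since $\nu=\omega+(-x_0)$ with $-x_0\in N$, the symmetric application of \eqref{eq:cs} gives the reverse inclusion $\n(\nu,N)\subseteq\n(\omega,N)$. Therefore $\n(\omega,N)=\n(\nu,N)=p^\alpha\Z$, which proves (1).

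For part (2), I would use \eqref{eq:cs} to compare the integers represented by the two polynomials. Because $c\in\n(\nu,N)=p^\alpha\Z$ by (\ref{R1}), I may write $c=p^\alpha k$ with $k\in\Z$. By \eqref{eq:cs}, the equation $Q(w)+2B(\omega,w)=p^\alpha m$ holds for some $w\in N$ if and only if $Q(w+x_0)+2B(\nu,w+x_0)=p^\alpha(m+k)$; and as $w$ ranges over $N$ the shifted vector $w+x_0$ ranges bijectively over $N$. Thus $H'$ represents $m$ if and only if $H$ represents $m+k$, so the two represented sets differ only by the fixed integer shift $k$. Consequently, if $H$ represents every sufficiently large positive integer, then $H'$ represents every integer $m$ for which $m+k$ is sufficiently large, hence all but finitely many positive integers, establishing (2).

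The computations here are entirely routine; the only points demanding care are confirming that the shift constant $k$ is genuinely an integer---which is exactly where hypothesis (\ref{R1}) enters---and tracking the bijection $w\mapsto w+x_0$ on $N$, so that the two represented sets are seen to coincide up to a translation by $k$. I expect no substantive obstacle beyond this bookkeeping.
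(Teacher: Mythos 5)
Your proof is correct and follows essentially the same route as the paper: both rest on the expansion $Q(w+x_0)+2B(\nu,w+x_0)=Q(w)+2B(\omega,w)+\bigl[Q(x_0)+2B(\nu,x_0)\bigr]$, used first to compare the two ideals (the paper extracts $2B(x,x_0)\equiv 0 \bmod p^\alpha$ as an intermediate step where you write each generator directly as a difference of elements of $\n(\nu,N)$, a negligible difference) and then to see that the represented sets differ by the integer shift $k=\frac{1}{p^\alpha}\bigl(Q(x_0)+2B(\nu,x_0)\bigr)$, which the paper phrases via the equality of cosets $\nu+N=\omega+N$.
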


\begin{proof}
Suppose that $\n(\nu,N)=p^\alpha \Z$.  Let $\omega=\nu+x_0$ with $x_0\in N$.  First, observe that for any $x\in N$,
\[
Q(x+x_0)+2B(\nu, x+x_0) = Q(x)+Q(x_0)+2B(x,x_o)+2B(\nu,x)+2B(\nu,x_0),
\]
which implies that $2B(x,x_0)\equiv 0\mod p^\alpha$ for any $x\in N$.  Now, for any $x\in N$ 
\[
Q(x)+2B(\omega,x)=Q(x)+2B(\nu,x)+2B(x_0,x)\equiv 0\mod p^\alpha,
\]
and hence $\n(\omega, N)\subseteq p^\alpha \Z$.  Switching the roles of $\nu$ and $\omega$ in the argument above, we get containment in the other direction.  Therefore, $\n(\omega,N)=p^\alpha \Z$

Define $p^\alpha m:=Q(x_0)+2B(\nu,x_0)$.  Then, $Q(\omega)+p^\alpha (n-m)=Q(\nu)+p^\alpha n$.  If $H(x)$ is almost universal, then $\nu+N$ represents $Q(\nu)+p^\alpha n$ for $n$ sufficiently large, and hence $\nu+N=\omega+N$ represents $Q(\omega)+p^\alpha (n-m)$ for all $n$ sufficiently large.  Therefore, if $H(x)$ is almost universal, then $H'(x)$ is almost universal.  $\square$
\end{proof}

Let $\{e_1,e_2,e_3\}$ be a basis for $N$. In view of Lemma \ref{L3}, we may assume that $\nu=\frac{1}{p^\alpha}(ae_1+be_2+ce_3)$, with $0\leq a,b,c\leq p^\alpha-1$, and at least one of $a,b,c$, not divisible by $p$.  There is no harm in assuming that $a\in \Z_p^\times$.  It is clear that all $\Z_p$-linear combinations of $\nu,e_2,$ and $e_3$ are contained in $M_p$, including $e_1=\frac{1}{a}(p^\alpha\nu-be_2-ce_3)$.  Therefore, $\{\nu,e_2,e_3\}$ is a basis for $M_p$, and relative to this basis every entry in the Gram matrix for $M_p$ is congruent to 0 $\mod p$, with the notable exception of $\epsilon$.  Therefore, as a consequence of the uniqueness of Jordan invariants, $M_p$ must be of the form 
\[
M_p\cong\lan\epsilon, p^i\beta, p^j\gamma\ran
\]
in a basis $\{\nu,f_1,f_2\}$, with $\beta,\gamma\in \Z_p^\times$ and $0\leq i\leq j$.   

We conclude this section with several technical lemmas which will be very helpful in proving the main result.  

\begin{lemma}\label{LA1}
If $K$ is a universal ternary $\Z_2$-lattice, then $K$ is isotropic and $\ord_2(dK)<2$.  When this is the case, $K\cong \lan 1,-1,-dK\ran$ and 
\begin{enumerate}[(1)]
\item if $\ord_2(dK)=1$, then $K$ primitively represents every element of $\Z_2$ except those in $4\Z_2^\times$;
\item if $\ord_2(dK)=0$, then $K$ primitively represents every element of $\Z_2$. 
\end{enumerate}
\end{lemma}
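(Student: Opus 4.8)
The plan is to work $2$-adically throughout, first settling isotropy and the shape of the ambient space, then pinning down the integral isometry class, and finally reading off the representation data by congruences. For isotropy, recall that over $\Q_2$ an anisotropic ternary space $V$ represents $a$ precisely when $V \perp [-a]$ is isotropic, and since the anisotropic quaternary $\Q_2$-space is unique, $V$ fails to represent exactly one square class. Thus an anisotropic ternary $\Z_2$-lattice omits a whole square class of $\Z_2$ and cannot be universal. Hence universality of $K$ forces $\Q_2 K$ to be isotropic, and an isotropic ternary space splits a hyperbolic plane, giving $\Q_2 K \cong [1,-1,-dK]$.

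Next I would fix the integral class. Since $K$ is universal it represents a unit, so its norm is $\Z_2$; in particular $K$ is odd and its scale is $\Z_2$. Because $\Q_2 K$ is isotropic, $K$ contains a primitive vector $u$ with $Q(u)=0$, and the standard $2$-adic splitting of a hyperbolic plane off an odd lattice of scale $\Z_2$ yields $K \cong \lan 1,-1\ran \perp \lan c\ran$; comparing discriminants gives $c=-dK$, so $K \cong \lan 1,-1,-dK\ran$. The bound $\ord_2(dK)<2$ now falls out: if $\ord_2(dK)\ge 2$ then $cz^2 \in 4\Z_2$ for every $z$, so any value $x^2-y^2+cz^2$ is congruent mod $4$ to $x^2-y^2 \in \{0,1,3\}$ and can never lie in $2\Z_2^\times$, contradicting universality. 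Hence $\ord_2(dK)\in\{0,1\}$.

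With $K \cong \lan 1,-1,-dK\ran$ in hand, the two statements follow from explicit primitive representations, using that $\lan 1,-1\ran$ represents exactly $\Z_2\setminus 2\Z_2^\times$ and primitively represents every odd $2$-adic integer with $z=0$ (the vector $(x,y,0)$ with $x^2-y^2$ odd has a unit coordinate). In case (2), where $-dK$ is a unit, take $z=0$ for odd $t$ and $z=1$ for even $t$; then $t$ (resp.\ $t-(-dK)$) is odd, is represented by $\lan 1,-1\ran$ with a unit among $x,y$, and the resulting vector is primitive, so every $t$ is primitively represented. In case (1), write $-dK=2\delta$ with $\delta$ a unit; the same device with $z=0$ handles odd $t$, the choice $z=1$ handles $2\Z_2^\times$, and $z=0$ with $x,y$ both odd handles $8\Z_2$. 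For $t\in 4\Z_2^\times$ a reduction mod $8$ shows $x^2-y^2+2\delta z^2\equiv 4u$ has no primitive solution (splitting on the parity of $z$ forces $x^2-y^2\in\{2,6\}$ or $x^2-y^2\equiv 4$ with $x,y$ both even), while $4u=(2x)^2-(2y)^2$ exhibits an imprimitive representation; thus $4\Z_2^\times$ is exactly the set of elements represented but not primitively represented.

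The main obstacle is the second paragraph, specifically the integral splitting: one must justify that an odd ternary $\Z_2$-lattice of scale $\Z_2$ that is isotropic over $\Q_2$ splits off the odd hyperbolic plane $\lan 1,-1\ran$ rather than the even plane, which requires moving the primitive isotropic vector into the unimodular Jordan component and tracking parity. Once the canonical form $\lan 1,-1,-dK\ran$ is secured, both the discriminant bound and the mod-$8$ bookkeeping that isolates $4\Z_2^\times$ are routine, the only delicate point there being to confirm that $4\Z_2^\times$, though excluded from primitive representation, is still represented.
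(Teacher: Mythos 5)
Your isotropy argument and your final congruence bookkeeping (which values $x^2-y^2-dK\,z^2$ takes primitively, and why $4\Z_2^\times$ is represented only imprimitively when $\ord_2(dK)=1$) are correct and agree in substance with the paper's. The gap is exactly at the step you flag as ``the main obstacle,'' and it is not a routine parity-tracking issue: the statement you invoke there is false. An odd ternary $\Z_2$-lattice of scale $\Z_2$ that is isotropic over $\Q_2$ need not split off a unimodular hyperbolic plane. For example, $K_0=\lan 1,2,-2\ran$ is odd, has scale $\Z_2$, and $\Q_2K_0\cong[1]\perp[1,-1]$ is isotropic, so $K_0$ contains primitive isotropic vectors (e.g.\ $u=e_2+e_3$); yet its unimodular Jordan component has rank one, so $K_0\not\cong\lan1,-1\ran\perp\lan c\ran$, since the ranks of Jordan components are invariants. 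The obstruction is that a primitive isotropic vector $u$ splits a hyperbolic plane only when $B(u,K)=\s(K)$, and here $B(u,K_0)=2\Z_2$. This $K_0$ is not universal (it misses the units $\equiv 5\bmod 8$), so it does not contradict the lemma, but it does show that the hypotheses you have extracted at that point --- odd, scale $\Z_2$, isotropic --- do not suffice to give the canonical form.

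To close the gap you must re-invoke universality to eliminate every lattice whose unimodular Jordan component is unary, i.e.\ every $K\cong\lan1\ran\perp 2^iL$ with $i>0$, as well as the proper decompositions $\lan1,\beta,2^j\gamma\ran$ with $j\ge2$. That case analysis is the bulk of the paper's proof: it checks that $\lan1,2\beta,2\gamma\ran$ misses a square class of units, that $\lan1,2\beta,4\gamma\ran$ misses certain elements of $2\Z_2^\times$, that $i\ge2$ or an improper binary complement traps the represented units in $1+4\Z_2$, and that $j\ge 2$ forces a missed square class; only after all of this does $K\cong\lan 1,-1,-dK\ran$ with $\ord_2(dK)\le 1$ emerge. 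Your proposal instead derives the discriminant bound \emph{from} the canonical form, whereas logically both must be extracted simultaneously from universality. As written, the central structural claim of the lemma is assumed rather than proved.
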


\begin{proof}
Suppose that $K$ is a universal ternary $\Z_2$-lattice, then in particular $K$ must be isotropic.

Since $K$ is universal, $K$ has $\lan1\ran$ is an orthogonal summand, and therefore $K$ is of the form $\lan1\ran\perp L$, where $L$ is a binary $\Z_2$-lattice, which is either proper or improper.  If $L$ is proper then $K$ has an orthogonal decomposition, and if $L$ is improper then it is of the form $2^kA(2,2)$ given in \cite[\S 93B]{OM}, where $k>0$.  We note that if $k=0$ then $L$ has an orthogonal basis, so scaling by the first term we can obtain something of the form $\lan1\ran\perp L$ with $L$ proper.  

First we will suppose that $L$ is proper with $\n(L)=\Z_2$, and therefore $K$ has an orthogonal decomposition of the form $\lan 1,\beta, 2^j\gamma\ran$ in a basis $\{e_1,e_2,e_3\}$ with $\beta,\gamma\in \Z_2^\times$ and $j>0$.  If $j\geq 2$, then for any arbitrary choice of $\beta$ and $\gamma$ it can easily be shown that $K$ will either miss some square class in $\Z_2^\times$ or every element in $2\Z_2^\times$.  Therefore, we conclude that $j<2$ when $K$ is universal. 

Suppose that $j=0$ and consequently $K\cong\lan1,\beta,\gamma\ran$.  Then $K$ is an isotropic unimodular $\Z_2$-lattice, and is therefore of the form $\mathbb H\perp \lan -\beta\gamma\ran$.  From this decomposition it is easy to verify that $K$ primitively represents every element in $\Z_2$.   

Suppose that $j=1$.  Since $K$ is isotropic, and the binary component $\lan\beta,2\gamma\ran$ is $\Z_2$-maximal, it follows that $\lan \beta,2\gamma\ran\cong\lan-1,-2\beta\gamma\ran$, and thus $K\cong\lan1,-1,-2\beta\gamma\ran$ in a basis $\{f_1,f_2,f_3\}$.  It is clear that the leading binary component of  $K$ primitively represents every element in $\Z_2^\times$.  Furthermore, for any $\delta\in \Z_2$, the vector $(1+\delta)f_1+(1-\delta)f_2$ is of length $4\delta$, and $(1+\delta)f_1+(1-\delta)f_2+f_3$ is of length $4\delta-2\beta\gamma$.  We observe that $(1+\delta)f_1+(1-\delta)f_2+f_3$ is a primitive vector for any choice of $\delta$, and $(1+\delta)f_1+(1-\delta)f_2$ is primitive just in case $\delta$ is even.  Hence we have shown that $K$ primitively represents every element of $\Z_2$, with the exception of those in $4\Z_2^\times$. 

Suppose that $\delta\in \Z_2^\times$.  Then there exist $x,y,z\in \Z_2$ such that $4\delta=x^2-y^2-2\beta\gamma z^2$.  It is clear that $x$ and $y$ have the same parity, or else the right hand side of the equality is odd.  But then, $x^2-y^2\equiv(x+y)(x-y)\equiv 0\mod 4$, meaning $z$ must be even, and hence $4\delta\equiv x^2-y^2\mod 8$.  If $x$ and $y$ were both even, then the representation would not be primitive, as such, we assume that $x$ and $y$ are units in $\Z_2$, which means that $x^2\equiv y^2\equiv 1\mod 8$.  Consequently, $4\delta\equiv 0\mod 8$, which is not true, since $\delta\in \Z_2^\times$.  Therefore, any representation of elements from $4\Z_2^\times$ by $K$ must be imprimitive.  
 
Suppose the unimodular component of $K$ is unary; that is, $K\cong\lan1\ran\perp 2^i L$, where $i>0$ and $\s(L)=\Z_2$.  If either $L$ is improper or if $i>1$, then $K$ only represents units congruent to $1\mod 4$, so we may assume that $i=1$ and $L\cong\lan\beta, 2^j\gamma\ran$, with $j\geq 0$.  Now, $K\cong\lan1,2\beta,2^{j+1}\gamma\ran$ and therefore we may assume that $j=0$ or $1$.  

Suppose that $j=0$, and $K\cong\lan 1,2\beta,2\gamma\ran$.  Then the only units represented by $K$ are those congruent to $1$, $1+2\beta$, $1+2\gamma$ or $1+2(\beta+\gamma)\mod 8$.  For any choice of $\beta$ and $\gamma$, it is clear that $K$ fails represent one square class of units, and hence it is not universal.  

Suppose that $j=1$, and $K\cong\lan 1,2\beta,4\gamma\ran$.  Since $K$ is isotropic, and since its leading binary component is $\Z_2$-maximal,  this implies that $K\cong\lan-\gamma,-2\beta\gamma,4\gamma\ran$, and hence $K^{-\gamma}\cong\lan 1,2\beta, -4\ran$.  Suppose that $\eta\in \Z_2^\times$, with $\eta\equiv \beta\mod 4$ and $\eta\not\equiv \beta\mod 8$.  If $2\eta$ is represented by $K^{-\gamma}$, then $\eta\equiv 2(x+y)(x-y)+\beta\mod 8$ for some $x,y\in \Z_2$.  If $x+y$ is even, then this implies that $\eta-\beta\equiv 0\mod 8$, a contradiction.  On the other hand, if $x+y$ is odd, then this implies that $\frac{\eta-\beta}{2}\equiv 1,3\mod 4$, also a contradition.  Therefore, $K^{-\gamma}$ does not represent $2\eta$, and hence $K$ is not universal.  
$\square$ \end{proof}

\begin{lemma}\label{shapeofnp}
If $H(x)$ is almost universal, then $N_q$ represents all of $\Z_q$ whenever $q\neq p$, and consequently, $N_q\cong\lan 1,-1,-dN\ran$ for $q\neq p,2$, and $N_2\cong\lan 1,-1,-dN\ran$ with $\ord_2(dN)\leq 1$.  
\end{lemma}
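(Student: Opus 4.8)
The plan is to get the first assertion for free from Lemma \ref{L1} together with the standing hypothesis (\ref{R2}), and then to read off the prescribed local shapes prime by prime from the resulting universality. Under (\ref{R2}) the conductor $\mathfrak{m}$ is a power of $p$, so every prime $q\neq p$ satisfies $q\nmid\mathfrak{m}$. Since $H(x)$ is assumed almost universal, Lemma \ref{L1}(1) then gives at once that $N_q$ represents all of $\Z_q$ for every $q\neq p$. This establishes the first statement, and all remaining work is local structure theory for universal ternary lattices.

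For an odd prime $q\neq p$, I would fix an orthogonal splitting $N_q\cong\lan q^{e_1}u_1,q^{e_2}u_2,q^{e_3}u_3\ran$ with $e_1\le e_2\le e_3$ and each $u_i\in\Z_q^\times$, which is available since every $\Z_q$-lattice is diagonalizable for odd $q$. Representing a unit forces $e_1=0$, and representing units from both square classes of $\Z_q^\times$ forces $e_2=0$ as well: if $e_2\ge 1$ then only the block $\lan u_1\ran$ contributes to units, and since $1+q\Z_q\subseteq\Z_q^{\times 2}$ for odd $q$, this block meets only the single square class of $u_1$. Thus $\lan u_1,u_2\ran$ is a unimodular binary summand. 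If $e_3=0$ then $N_q$ is unimodular and isotropic, since every ternary form over $\mathbb F_q$ is isotropic and one lifts by Hensel; if $e_3\ge 1$ then universality forces $\lan u_1,u_2\ran\cong\mathbb H$, as a unimodular anisotropic binary $\Z_q$-lattice represents only elements of even order and so the lattice would fail to represent $q\Z_q^\times$. In either case $\mathbb H$ splits off, giving $N_q\cong\mathbb H\perp\lan w\ran=\lan 1,-1,w\ran$; comparing discriminants (using $d\,\mathbb H=-1$) pins down $\lan w\ran\cong\lan -dN\ran$, so $N_q\cong\lan 1,-1,-dN\ran$, as claimed.

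The prime $q=2$ is then handled with no further computation: since $p$ is odd we have $2\neq p$, so the first part of the lemma shows that $N_2$ is a universal ternary $\Z_2$-lattice, and Lemma \ref{LA1} immediately yields $N_2\cong\lan 1,-1,-dN\ran$ together with $\ord_2(dN)<2$, i.e.\ $\ord_2(dN)\le 1$. The only genuine content therefore lies in the odd-prime case of the previous paragraph, and the step I expect to require the most care is ruling out an anisotropic unimodular binary block when $e_3\ge 1$: one must verify, by testing representability of elements of odd order, that such a configuration cannot be universal, exactly in the spirit of the $j=1$ analysis carried out in Lemma \ref{LA1}.
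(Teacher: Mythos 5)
Your proposal is correct and follows essentially the same route as the paper: Lemma \ref{L1}(1) combined with the standing assumption $\mathfrak{m}=p^\gamma$ gives universality of $N_q$ for all $q\neq p$, the case $q=2$ is delegated to Lemma \ref{LA1}, and the odd primes are handled by local structure theory. The only difference is one of detail: where the paper simply asserts that a universal $N_q$ is isotropic and hence $\cong\lan 1,-1,-dN\ran$ for odd $q\neq p$, you supply the full diagonalization argument (forcing a unimodular binary summand from the two unit square classes and ruling out an anisotropic unimodular binary block), which is a legitimate and complete justification of the step the paper leaves implicit.
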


\begin{proof}
If $H(x)$ is almost universal, then $N_q$ represents all of $\Z_q$ whenever $q\neq p$ by Lemma \ref{L1} part (1).  Hence, Lemma \ref{LA1} implies that that $N_2\cong\lan 1,-1,-dN\ran$ with $\ord_2(dN)\leq 1$.  And for odd primes $q\neq p$, since $N_q$ is universal it must also be isotropic, and therefore it follows that $N_q\cong\lan 1,-1,-dN\ran$.
$\square$ \end{proof}

\begin{lemma}\label{L3.4}
Suppose that $N_q$ represents every integer in $\Z_q$ for all primes $q\neq p$.  Then, every positive integer $\epsilon+p^\alpha n$ which is not of the form $4\delta$, where $\delta\in \Z_2^\times$, is represented primitively by $\gen(M)$.  If $\epsilon+p^\alpha n=4\delta$, then $\frac{\epsilon+p^\alpha n}{4}$ is represented primitively by $\gen(M)$. 

\end{lemma}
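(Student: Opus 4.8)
The plan is to reduce primitive representation by $\gen(M)$ to a purely local question and then verify the local conditions prime by prime, letting the prime $2$ produce the dichotomy in the statement. I would invoke the classical local--global principle for primitive representations by a genus: a positive integer $t$ is primitively represented by $\gen(M)$ if and only if $t$ is primitively represented by $M_q$ for every prime $q$ (the archimedean place is harmless, since $M$ is positive definite and the targets are positive). So it suffices to exhibit, for each $q$, a primitive vector of $M_q$ of the required length, the target being $t:=\epsilon+p^\alpha n$ in the generic case and $t/4$ in the exceptional case.

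First I would dispose of the primes $q\neq p$. Since the conductor is $p^\alpha$, for every $q\neq p$ the element $p^\alpha$ is a unit in $\Z_q$, whence $\nu\in N_q$ and $M_q=N_q$. By hypothesis $N_q$ represents all of $\Z_q$, so Lemma \ref{shapeofnp} gives $N_q\cong\lan 1,-1,-dN\ran$ for odd $q\neq p$ and $N_2\cong\lan 1,-1,-dN\ran$ with $\ord_2(dN)\leq 1$. For odd $q\neq p$ the orthogonal summand $\lan 1,-1\ran$ is a hyperbolic plane, which primitively represents every element of $\Z_q$; as a vector primitive in an orthogonal summand remains primitive in $N_q$, both $t$ and (in the exceptional case) $t/4$ are primitively represented at such $q$, the latter lying in $\Z_q$ because $4$ is a unit there. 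At $q=2$ I would apply Lemma \ref{LA1}: if $\ord_2(dN)=0$ then $N_2$ primitively represents all of $\Z_2$, so $t$ is primitively represented; if $\ord_2(dN)=1$, then $N_2$ primitively represents every $2$-adic integer except those in $4\Z_2^\times$. This is exactly where the two cases split: when $\ord_2(t)=2$, i.e. $t\in 4\Z_2^\times$, the integer $t$ fails to be primitively represented by $N_2$, but $t/4$ is a $2$-adic unit and hence is primitively represented; otherwise $t$ itself lies outside $4\Z_2^\times$ and is primitively represented.

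Next I would treat the prime $p$, where the coset structure enters but turns out to be easy. Using $M_p\cong\lan\epsilon,p^i\beta,p^j\gamma\ran$ in the basis $\{\nu,f_1,f_2\}$, the vector $s\nu$ has length $s^2\epsilon$ and is primitive whenever $s\in\Z_p^\times$. Since $t\equiv\epsilon\pmod{p^\alpha}$ with $\epsilon\in\Z_p^\times$, $\alpha\geq 1$ and $p$ odd, the Local Square Theorem shows $t/\epsilon$ is a square in $\Z_p^\times$, so a suitable $s\nu$ primitively represents $t$ at $p$; in the exceptional case $t/(4\epsilon)=(t/\epsilon)\cdot(1/4)$ is again a square, because $4$ is a square modulo $p$, so $s\nu$ primitively represents $t/4$. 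Thus $M_p$ primitively represents the appropriate target in both cases.

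Finally I would assemble the local data: having produced a primitive local representation of the appropriate target ($t$ generically, $t/4$ when $t=4\delta$ with $\delta\in\Z_2^\times$) at every prime, the local--global principle yields primitive representation by $\gen(M)$. The main obstacle is not any single computation but keeping the two targets consistent across all places simultaneously: the failure forcing the passage from $t$ to $t/4$ occurs only at $2$ and only when $\ord_2(dN)=1$, and I must confirm that this substitution preserves primitive representability everywhere else, which it does since $4$ is a $q$-adic unit for every $q\neq 2$ and $1/4$ is a square in $\Z_p$. The prime $2$ analysis recorded in Lemma \ref{LA1} is doing all the genuine work; the remainder is bookkeeping.
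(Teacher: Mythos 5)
Your proposal is correct and follows essentially the same route as the paper: reduce to local primitive representability at each prime, handle $q\neq p$ via Lemma \ref{shapeofnp} and Lemma \ref{LA1} (with the prime $2$ forcing the $4\delta$ dichotomy), and handle $p$ by noting $\epsilon+p^\alpha n$ lies in the square class of $\epsilon$ so that a unit multiple of $\nu$ does the job. The paper's version is terser but identical in substance.
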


\begin{proof}
First we observe that $\epsilon\in \Z_p^\times$ is clearly represented primitively by $M_p$, so any element in $\Z_p$ which is in the same square class modulo $p$ as $\epsilon$ will also be represented primitively by $M_p$.  

For all odd primes $q\neq p$, it follows from Lemma \ref{shapeofnp} that every $q$-adic integer is primitively represented by $N_q=M_q$.  And it is immediate from Lemma \ref{LA1}  $\epsilon+p^\alpha n$ (or $\frac{\epsilon+p^\alpha n}{4}$, accordingly) is represented primitively by $N_2=M_2$. 

Hence, all $\epsilon+p^\alpha n$ (and $\frac{\epsilon+p^\alpha n}{4}$ as appropriate) are represented primitively by $\gen(M)$. $\square$ 
\end{proof}

\section{Main Result}

Let $\rad(dN)$ denote the square-free part of the discriminant of $N$, and let $\rad(dN)'$ denote the non-$p$ part of $\rad(dN)$. 

\begin{theorem}\label{main}
$H(x)$ is almost universal if and only if $N_q$ represents all $q$-adic integers over $\Z_q$ whenever $q\neq p$, and 
\begin{enumerate}[(1)]
\item $p\not\equiv 7\mod 8$; or,
\item  $p\equiv 7\mod 8$ and one of the following holds:
\begin{enumerate}[(a)]
\item $\ord_p(dN)$ is even;
\item $\rad(dN)$ is divisible by a prime $q$ satisfying $\big(\frac{-p}{q}\big)=-1$;
\item  $\big(\frac{Q(\nu)}{p}\big)=-1$;
\item $\rad(dN)'$ is represented by $\nu+N$.
\end{enumerate}
\end{enumerate}
\end{theorem}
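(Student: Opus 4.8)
The plan is to reduce the almost universality of $H$ to a finiteness statement about primitive spinor exceptions of $\gen(M)$, and then to detect those exceptions through local spinor-norm computations. By the congruence argument preceding Lemma \ref{L2}, every representation of $t:=\epsilon+p^\alpha n$ by $M$ comes from the coset $\pm\nu+N$, so $H$ represents $n$ if and only if $M$ represents $t$; and when $t\in 4\Z_2^\times$, a representation of $t/4$ doubles to one of $t$, which the same congruence forces back into $\pm\nu+N$. Invoking the effective asymptotic results underlying the Duke--Schulze-Pillot theory --- a positive definite ternary lattice represents every sufficiently large integer primitively represented by its spinor genus, and an integer primitively represented by $\gen(M)$ that is not a primitive spinor exception is already primitively represented by $\spn(M)$ --- together with the primitivity furnished by Lemma \ref{L3.4}, I obtain that $H$ is almost universal if and only if $N_q$ represents all of $\Z_q$ for every $q\neq p$ and only finitely many $t=\epsilon+p^\alpha n$ are primitive spinor exceptions of $\gen(M)$ not represented by $\spn(M)$.

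With this reduction, the necessity of ``$N_q$ represents all of $\Z_q$ for $q\neq p$'' is exactly Lemma \ref{L1}(1), so I assume it throughout. The first dichotomy comes from Lemma \ref{L2}: a primitive spinor exception forces $E=\Q(\sqrt{-p})$ and $p\equiv 7\bmod 8$. Hence if $p\not\equiv 7\bmod 8$ there are no spinor exceptions at all and $H$ is automatically almost universal, giving case (1). For the remainder I take $p\equiv 7\bmod 8$ and $E=\Q(\sqrt{-p})$, so a primitive spinor exception $t$ must satisfy $-t\,dM\in-p(\Q^\times)^2$ together with $\theta^*(M_q,t)\subseteq\N_q(E)$ for every prime $q$.

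The heart of the argument is a prime-by-prime analysis of these local conditions, using the Jordan splitting $M_p\cong\lan\epsilon,p^i\beta,p^j\gamma\ran$ together with $N_q\cong\lan 1,-1,-dN\ran$ for $q\neq p$ (Lemma \ref{shapeofnp}) and the spinor-norm formulae of \cite{EH75,EH78,EH94,H75}. Since $dN=p^{2\alpha}dM$, the square class is pinned down: $-t\,dM\in-p(\Q^\times)^2$ forces $\ord_p(dN)$ odd and $\rad(t)=\rad(dN)'$, which immediately yields case (2a), as an even $\ord_p(dN)$ makes the square-class condition unsatisfiable. Next, if a prime $q\mid\rad(dN)$ has $\big(\tfrac{-p}{q}\big)=-1$, then $\N_q(E)$ consists of elements of even $q$-valuation while the odd Jordan block at $q$ forces a uniformizer into $\theta^*(M_q,t)$, so the inclusion fails and there are no exceptions --- case (2b). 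Computing $\theta^*(M_p,t)$ from the unimodular unit $\epsilon$ shows that when $\big(\tfrac{Q(\nu)}{p}\big)=-1$ the group $\theta^*(M_p,t)$ contains a nonresidue unit, which escapes $\N_p(E)=\lan(\Z_p^\times)^2,p\ran$ --- case (2c). In each of (2a)--(2c) the family $\{\epsilon+p^\alpha n\}$ contains no primitive spinor exception, so $H$ is almost universal.

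It remains to treat the case $p\equiv 7\bmod 8$ in which (2a), (2b) and (2c) all fail: then $\ord_p(dN)$ is odd, every prime dividing $\rad(dN)'$ makes $\theta^*(M_q,t)\subseteq\N_q(E)$ for all $q\neq p$, and $\big(\tfrac{\epsilon}{p}\big)=1$ secures the condition at $p$, so every $t=\epsilon+p^\alpha n$ with $\rad(t)=\rad(dN)'$ --- an infinite family, nonempty by Hensel's lemma since $\epsilon$ and $\rad(dN)'$ then share a square class mod $p$ --- is a genuine primitive spinor exception. For such exceptions the genus splits evenly into representing and non-representing spinor genera, with the side containing $\cls(M)$ constant across the square class, so testing on the representative $\rad(dN)'$ shows that $\spn(M)$ represents the whole family precisely when $\rad(dN)'$ is represented by $\nu+N$; this gives the equivalence with (2d) and completes the proof. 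The hard part will be exactly the local spinor-norm bookkeeping of the previous paragraph --- evaluating $\theta^*(M_p,t)$ and $\theta^*(M_q,t)$ from the (corrected) formulae of \cite{EH75,EH78,EH94,H75,CO09}, matching them against $\N_q(\Q(\sqrt{-p}))$, and most delicately reducing membership of $\cls(M)$ in the representing spinor genus to the single checkable statement that $\nu+N$ represents $\rad(dN)'$, while keeping the Duke--Schulze-Pillot input effective enough to guarantee only finitely many exceptions.
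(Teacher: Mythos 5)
Your overall architecture---reduce almost universality to the absence (or controlled presence) of primitive spinor exceptions among $\epsilon+p^\alpha n$ via Duke--Schulze-Pillot, use Lemma \ref{L2} to dispose of $p\not\equiv 7\bmod 8$, and kill the exceptions in cases (2a)--(2c) by local spinor-norm computations at $p$ and at the primes dividing $\rad(dN)$---is the same as the paper's, and those parts of your outline are essentially correct (the paper carries out the $\theta(O^+(M_p))$ computation via Kneser's Satz 3 and the $q\neq p$ computations via \cite[Theorem 1(a)]{EH94}, exactly as you indicate). The sufficiency of (2d) also goes through, though not for the reason you give: once $\rad(dN)'$ is represented by $\nu+N\subseteq M$, every square multiple $c^2\rad(dN)'$ is represented by $M$ by the trivial scaling $v\mapsto cv$, and every member of the family that is a spinor exception is forced into the square class of $\rad(dN)'$ by the condition $E=\Q(\sqrt{-p})$; no claim about spinor genera is needed there.

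The genuine gap is in the necessity of (2d), and it sits precisely at your assertion that ``the side containing $\cls(M)$ \emph{is} constant across the square class.'' This is false, and its falsity is the whole content of Schulze-Pillot's theorem \cite{S-P00}: for a primitive spinor exception $t$, the half of the spinor genera primitively representing $c^2t$ varies with the Artin symbols of the primes dividing $c$, so knowing that $M$ fails to represent $\rad(dN)'$ does not by itself rule out that $M$ represents all but finitely many square multiples of it. What the paper actually needs (and proves) is two further steps that your proposal omits entirely. First, \cite{S-P00} supplies infinitely many primes $q$, splitting in $\Q(\sqrt{-p})$, for which $q^2\rad(dN)'$ is \emph{not} represented by $M$. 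Second---and this is the delicate point---those integers $q^2\rad(dN)'$ must actually lie in the arithmetic progression $\epsilon+p^\alpha\Z$, or they say nothing about $H$. The paper arranges this with a \v{C}ebotarev argument inside $\Q(\zeta_{p^\alpha})$, exploiting that $\Q(\sqrt{-p})$ is the unique quadratic subfield of $\Q(\zeta_{p^\alpha})$ when $p\equiv 3\bmod 4$, so that the splitting condition on $q$ in $\Q(\sqrt{-p})$ and the congruence $q\equiv\pm\rho\bmod p^\alpha$ (with $\rho^2\equiv\epsilon\cdot\rad(dN)'^{-1}$) can be imposed simultaneously; the sign ambiguity is absorbed using complex conjugation, whose fixed field $\Q(\zeta_{p^\alpha})^+$ cannot contain $\Q(\sqrt{-p})$. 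Without this compatibility argument your proof of the ``only if'' half of (2d) does not close.
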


\begin{proof}
We suppose throughout that $N_q$ represents all $q$-adic integers over $\Z_q$ whenever $q\neq p$. 

For part (1), suppose that $N_q$ is universal over $\Z_q$ for every $q\neq p$, and $p\not\equiv 7\mod 8$.  From Lemma \ref{L2}, it is clear that $M$ has no primitive spinor exceptions.  Combining this with Lemma \ref{L3.4}, $\epsilon+p^\alpha n$ (or $\frac{\epsilon+p^\alpha n}{4}$, as appropriate) is represented primitively by the spinor genus of $M$ for all $n$.  Now, it follows from \cite[Corollary]{DS-P90} that $\epsilon+p^\alpha n$ (similarly $\frac{\epsilon+p^\alpha n}{4}$) is represented by the lattice $M$ for $n$ sufficiently large, and hence by the coset $\nu+N$.  Therefore, $H(x)$ is almost universal. 
 
For part (2), we suppose that $p\equiv 7\mod 8$.  If $\epsilon+p^\alpha n$ is a primitive spinor exception of the genus of $M$, then $E:=\Q(\sqrt{-(\epsilon+p^\alpha n)dN})$ is $\Q(\sqrt{-p})$, by Lemma \ref{L2}.  But $\ord_p(dN)$ and $\ord_p(dM)$ have the same parity, and in this case, they must both be odd.  Therefore, if $\ord_p(dN)$ is even, then $\epsilon+p^\alpha n$ cannot be a primitive spinor of exception of the genus of $M$.  

Suppose that (a) fails, so $\ord_p(dN)$ is odd.  Suppose that $q$ is a prime with $\big(\frac{-p}{q}\big)=-1$.  At such a prime, $-p$ is a non-square, and hence $E_q=\Q_q(\sqrt{-p})$ is a quadratic extension over $\Q_q$.  By \cite[Theorem 1(a)]{EH94}, $\theta(O^+(M_q))\subseteq \mathfrak{N}_q(E)$ if and only if $\ord_q(dN)$ is even.  If (b) holds, then $q\mid \rad(dN)$ meaning that $\theta(O^+(M_q))\not\subseteq \mathfrak{N}_q(E)$, and therefore $\epsilon+p^\alpha n$ is not a primitive spinor exception of the genus of $M$.  

Suppose (a) and (b) fail, but (c) holds.  Then, 
\[
M_p\cong\lan\epsilon, p^i\beta, p^j\gamma\ran
\]
and since $\ord_p(dN)$ is odd, $i$ and $j$ have different parity.  Now we can compute the spinor norm of $O^+(M_p)$ using \cite[Satz 3]{K56}.  Suppose that $i$ is even, meaning that $\Q_pM$ contains $[\epsilon,\beta]$ as a subspace.  Since $M_p$ is anisotropic by Lemma \ref{L1} part (3), we conclude that $-\epsilon\beta$ must be a nonsquare in $\Q_p$.  Furthermore, since $p\equiv 7\mod 8$, and therefore $-1$ is not a square in $\Q_p$, we conclude that $\epsilon\beta$ is a square in $\Q_p$.  Since $\rad(dN)'$ is only divisible by primes which are squares in $\Q_p$ by the failure of (b), it follows that $\gamma$ is also a square in $\Q_p$.  Computing the spinor norm in this case gives
\[
\theta(O^+(M_p))=\{1, \epsilon\beta,\epsilon\gamma p, \beta\gamma p\}\Q_p^{\times 2}=\{1,\epsilon p\}\Q_p^{\times 2},
\]
where by (c), $\epsilon$ is non-square in $\Q_p$.  For $E$ defined as above, $\mathfrak{N}_p(E)=\{1,p\}\Q_p^{\times 2}$.  Since, $\theta(O^+(M_q))\not\subseteq \mathfrak{N}_q(E)$, therefore $\epsilon+p^\alpha n$ cannot be a primitive spinor exception of $\gen(M)$.  The case when $j$ is even is the same, substituting $i$ for $j$ and $\beta$ for $\gamma$ where necessary.  

We have shown above that when one of (a), (b), or (c) holds, then $\gen(M)$ has no primitive spinor exceptions of the form $\epsilon+p^\alpha n$ in $\Q_p$.  In fact, since these arguments rely only on the square class of $\epsilon+p^\alpha n$, they can be easily extended to show that $\gen(M)$ has no primitive spinor exceptions which are of the form $t^2(\epsilon+p^\alpha n)$, where $t\in \Z_p^\times$.  Suppose we are in the special case where $\epsilon+p^\alpha n=4\delta$ with $\delta\in \Z_2^\times$.  Then $\frac{\epsilon+p^\alpha n}{4}$ is represented primitively by the genus of $M$ by Lemma \ref{L3.4}, and if one of (a), (b), or (c) holds, then $\frac{\epsilon+p^\alpha n}{4}$ is not a primitive spinor exception of the genus of $M$.  From \cite[Corollary]{DS-P90}, we conclude that $\frac{\epsilon+p^\alpha n}{4}$, and hence $\epsilon+p^\alpha n$, are represented by $M$, for all sufficiently large $n$.  Therefore, $\epsilon+p^\alpha n$ are represented by $\nu+N$ for all sufficiently large $n$.   In a similar manner, when we are not in this special case, then $\epsilon+p^\alpha n$ is represented primitively by the genus of $M$ by Lemma \ref{L3.4}, and if one of (a), (b), or (c) holds, then $\epsilon+p^\alpha n$ is not a primitive spinor exception of the genus of $M$ for every positive integer $n$.  Hence, $\epsilon+p^\alpha n$ is represented by $M$ for all $n$ sufficiently large, and therefore $H(x)$ is almost universal. 

Suppose (a), (b), and (c) all fail.  We will show that $\rad(dN)'$ is a primitive spinor exception of $\gen(M)$.  Without any confusion, in what follows, let $E$ denote the field $\Q(\sqrt{-\rad(dN)'dN})=\Q(\sqrt{-p})$.  

First, we claim that $\rad(dN)'$ is represented primitively by the genus of $M$.  Since the prime factors of $\rad(dN)'$ can appear with order at most 1, it is clear from combining Lemma \ref{LA1} and Lemma \ref{shapeofnp} that $\rad(dN)'$ is represented primitively by $M_q$ when $q\neq p$.  By the failure of (b), $\rad(dN)'$ is only divisible by primes which are squares in $\Q_p$, and therefore $\rad(dN)'$ is a square unit in $\Q_p$.  By the failure of (c), $\epsilon$ is a square unit in $\Q_p$.  This means that $\rad(dN)'$ and $\epsilon$ are in the same square class modulo $p$.  Since $\epsilon$ is represented primitively by $M_p$, it follows that $\rad(dN)'$ is also represented primitively by $M_p$.  This proves that $\rad(dN)'$ is represented primitively by $\gen(M)$. 

Since $p\equiv 7\mod 8$, $E$ splits at the prime $2$.  Moreover, $E$ also splits at all primes $q$ such that $\big(\frac{-p}{q}\big)=1$.  Hence for these primes $\theta(O^+(M_q))\subseteq \mathfrak{N}_q(E)=\Q_q^\times$ for these $q$, and therefore $\theta^*(M_q,\rad(dN)')=\mathfrak{N}_q(E)$, as explained in equation (3) of \cite{EH94}.  At those primes $q$ for which $\big(\frac{-p}{q}\big)=-1$, it follows from the failure of (b) that $q\nmid \rad(dN)'$, and $M_q\cong\lan 1,-1,dN\ran$ from Lemma \ref{shapeofnp}.  Therefore, from \cite[Theorem 1(a)]{EH94}, we have $\theta(O^+(M_q))\subseteq \mathfrak{N}_q(E)$, and since $\rad(dN)'$ is not in any ideal generated by $q$ we get $\theta^*(M_q,\rad(dN)')=\mathfrak{N}_q(E)$.  

At the prime $p$ we will compute the spinor norm using \cite[Satz 3]{K56}.  Since 
\[
M_p\cong\lan \epsilon,\beta p^i,\gamma p^j\ran,
\]
we get 
\[
\theta(O^+(M_p))=\{1, \epsilon\beta p^i,\epsilon \gamma p^j, \beta\gamma p^{i+j}\}\Q_p^{\times 2}.
\]
By the failure of (c), $\epsilon$ is a square in $\Q_p$.  If $i$ is even, then $\epsilon\beta$ must be a square in $\Q_p$ since $\Q_pM$ contains the anisotropic subspace $[\epsilon,\beta]$.  In this case, $\gamma$ must also be a square in $\Q_p$, by the failure of (b).  Similarly, if $i$ is odd, and $j$ is even, then $\gamma$ is a square in $\Q_p$, since $\Q_pM$ contains the anisotropic subspace $[\epsilon,\gamma]$, and $\epsilon$ is a square.  Therefore, in any case
\[
\theta(O^+(M_p))=\{1, p\}\Q_p^{\times 2},
\]
and hence $\theta(O^+(M_p))\subseteq \mathfrak{N}_p(E)$.  Finally, $\mathfrak{N}_p(E)=\theta^*(M_p,\rad(dN)')$, by \cite[Theorem 1(b)]{EH94}, since $\rad(dN)'$ will not be in any ideal generated by $p$.

Suppose that (a), (b), and (c) all fail.  Now, we will show that $H(x)$ is almost universal if and only if (d) holds.  Suppose that (d) holds, then $\rad(dN)'$ is represented by $M$.  Suppose we are in the special case $\epsilon+p^\alpha n=4\delta$ with $\delta\in \Z_2^\times$.  From Lemma \ref{L3.4}, we know that $\delta$ is represented primitively by the genus of $M$.  Suppose that $\delta$ is not a primitive spinor exception of the genus of $M$.  Then, all sufficiently large $\delta$ and hence $4\delta$ are represented by $M$.  Suppose that $\delta$ is a primitive spinor exception, then $\delta$ is a square multiple of $\rad(dN)'$ and hence $\delta$ and therefore $4\delta$ is represented by $M$.  Thus $4\delta$ must be represented by $M$.  Now suppose that $\epsilon+p^\alpha n\not\in 4\Z_2^\times$, and suppose $\epsilon+p^\alpha n$ is not a primitive spinor exception of $\gen(M)$.  Then $\epsilon+p^\alpha n$ is represented by $M$ when $n$ is sufficiently large.  Otherwise, $\epsilon+p^\alpha n$ must be a square multiple of $\rad(dN)'$ and hence $\epsilon+p^\alpha n$ represented by $M$.  Therefore, $\epsilon+p^\alpha n$ is represented by $M$ for all $n$ sufficiently large and hence $H(x)$ is almost universal.  

Conversely, suppose that (d) fails.  This implies that $\rad(dN)'$ is not represented by $M$.  Then there exist, as shown in \cite{S-P00}, infinitely many primes $q$ such that $q^2\rad(dN)'$ are not represented by $M$.  Further, these $q$ are precisely the primes that split in $\Q(\sqrt{-p})$.  Let $\mu$ be the inverse of $\rad(dN)'$ modulo $p$.  Then $\epsilon\mu$ is a square modulo $p^\alpha$, and thus $\rho^2\equiv \epsilon\mu\mod p^\alpha$ for some integer $\rho$.   

Since $p\equiv 7\mod 8$, the cyclotomic extension $\Q(\zeta_{p^\alpha})$ contains $\Q(\sqrt{-p})$ as its unique quadratic extension.  For an odd prime $q\neq p$, let $J$ be the fixed field of the decomposition group, $G_q$, which is generated by the Frobenius automorphism $\varphi_q$.  Then, $J$ is the maximal subfield of $\Q(\zeta_{p^\alpha})$ in which $q$ splits completely.  Since $q$ splits completely in $J$ if and only if $q$ splits completely in any subextension of $J$ properly containing $\Q$, Galois theory implies that $q$ splits completely in $\Q(\sqrt{-p})$ if and only if $\varphi_q$ is in $H:=Gal(\Q(\zeta_{p^\alpha})/\Q(\sqrt{-p}))$.

The $2$-element subgroup generated by complex conjugation, $\varphi_{-1}$, has as its fixed field the maximal real subfield $\Q(\zeta_{p^\alpha})^+$, which cannot sit between $\Q(\sqrt{-p})$ and $\Q(\zeta_{p^\alpha})$.  For any integer $\rho$ relatively prime to $p$, either $\varphi_\rho\in H$, or $\varphi_\rho\varphi_{-1}=\varphi_{-\rho}\in H$.  If $\varphi_\rho\in H$, then by the $\check{C}$ebotarev Density Theorem there exist infinitely many primes $q$ such that $\varphi_q=\varphi_\rho\in H$, and so $q$ splits in $\Q(\sqrt{-p})$.  On the other hand, if $\varphi_\rho\not\in H$, then $\varphi_{-\rho}\in H$, and again there exist infinitely many primes $q$ such that $\varphi_q=\varphi_{-\rho}\in H$.  In both cases, $q$ splits in $\Q(\sqrt{-p})$ and either $\rho\equiv q\mod p^\alpha$, or $-\rho\equiv q\mod p^\alpha$.    

Therefore, for a fixed integer $\rho$, we know that there are infinitely many primes $q$ that split in $\Q(\sqrt{-p})$ and either $q\equiv \rho\mod p^\alpha$ or $q\equiv-\rho\mod p^\alpha$.  So we have an infinite family of positive integers, $n:=\frac{q^2\rad(dN)'-\epsilon}{p^\alpha}$, for which $\epsilon+p^\alpha n$ are not represented by $M$, and hence $H(x)$ is not almost universal.   
$\square$ \end{proof}

\section*{Acknowledgements}

The author would like to thank her thesis advisor Wai Kiu Chan for his indispensable guidance in this project.  



\end{document}